\theoremstyle{plain}
\newtheorem{thm}{Theorem}[section]
\newtheorem*{thm*}{Theorem}
\newtheorem{fact}[thm]{Fact}
\newtheorem{prop}[thm]{Proposition}
\newtheorem{defn}[thm]{Definition}
\newtheorem{cor}[thm]{Corollary}
\newtheorem{cond}[thm]{Condition}
\newtheorem{lem}[thm]{Lemma}
\newtheorem{obs}[thm]{Observation}
\newtheorem{quest}[thm]{Question}
\newtheorem{conj}[thm]{Conjecture}
\theoremstyle{definition}
\title{Intersection of a partitional and a general infinite matroid}
\author{Attila Jo\'{o}}
\thanks{The author would like to thank the generous support of the Alexander 
von Humboldt Foundation and NKFIH 
OTKA-129211}
\address{Attila Jo\'{o},
University of Hamburg, Department of Mathematics, Bundesstra{\ss}e 55 (Geomatikum), 20146 Hamburg, Germany}
\email{attila.joo@uni-hamburg.de}
\address{Attila Jo\'{o},
Alfr\'{e}d R\'{e}nyi Institute of Mathematics, Set theory and general topology research division, 13-15 Re\'{a}ltanoda St., 
Budapest, Hungary}
\email{jooattila@renyi.hu}
\keywords{infinite matroids, partitional matroids, Matroid Intersection Conjecture}
\subjclass[2020]{Primary: 05B35,  Secondary: 05A18, 03E05} 
\begin{document}

\begin{abstract}
   Let $ E $ be a possibly infinite set and let $ M $ and $ N $ be matroids defined on $ E $. We say that the pair $ \{ M,N \} $ has 
   the Intersection property if $ M $ and $ N $ share an independent set $ I $ admitting a bipartition $ I_M\sqcup I_N $ such that
   $ \mathsf{span}_M(I_M)\cup \mathsf{span}_N(I_N)=E $. The Matroid Intersection Conjecture of Nash-Williams says that 
   every matroid pair has the Intersection property.
   
  The conjecture is known and easy to prove in the case when one of the matroids is uniform and it was shown by  Bowler and 
   Carmesin that the conjecture is implied by its special case where one of the matroids is a direct sum of uniform matroids, i.e., is
   a partitional matroid. We show that if $ M $ is an arbitrary matroid and $ N $ is the direct 
   sum of finitely many uniform matroids, then $ \{ M, N \} $ has the Intersection property.
\end{abstract}

\maketitle
\section{Introduction}
\subsection{Infinite matroids and the Matroid Intersection Conjecture}
 Some of the motivating examples of matroids are vector-systems with  linear independence and graphs with graph 
 theoretic cycles as circuits. Both types of structures can be infinite 
 in which case the resulting matroid is infinite as well.  An axiomatization of matroids (in the 
 language of circuits) that allows 
  infinite ground sets  can be obtained from the axiomatization of finite matroids  in a natural 
 way:  $ \mathcal{C} $ is the set of the circuits of a finitary matroid\footnote{A matroid with only finite circuits called finitary.}  
 if  $ \mathcal{C} $ is a family of finite 
 nonempty  pairwise $ \subseteq 
 $-incomparable subsets  of a 
 possible infinite set $ E $ satisfying the Circuit elimination axiom.\footnote{If $ C_0, C_1\in \mathcal{C} $ are distinct and $ 
 e\in C_0\cap C_1 $, then $ \exists C_2\in \mathcal{C} $ with $ C_2\subseteq C_0\cup C_1-e $.} Working with this 
 definition, Nash-Williams proposed  his Matroid Intersection Conjecture  
 \cite{aharoni1998intersection}  which has been the most important open problem in 
infinite matroid theory for decades. It generalizes the Matroid Intersection Theorem of Edmonds 
\cite{edmonds2003submodular} to infinite 
matroids capturing  the combinatorial structure corresponding to the largest common independent sets instead of dealing with 
infinite 
quantities (cardinality usually turns out to be an overly rough measure for problems in infinite combinatorics). Adopting a 
terminology of Bowler and Carmesin,  for a pair 
$ \{ M,N \} $ of matroids defined on the same edge set $ E $ we say  that it has the Intersection property if $ M $ and $ N $ has a 
common  independent set $ I $ admitting a bipartition $ I_M\sqcup I_N $ such that $ \mathsf{span}_M(I_M)\cup 
\mathsf{span}_N(I_N)=E $.

\begin{conj}[Matroid Intersection Conjecture, \cite{aharoni1998intersection}*{Conjecture 1.2}]\label{MIC}
Every pair $ \{ M,N \} $ of  matroids defined on the same (potentially infinite) ground set has the Intersection property.
\end{conj}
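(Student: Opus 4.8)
The plan is to build on the reduction of Bowler and Carmesin noted above: the full strength of Conjecture~\ref{MIC} is equivalent to its special case in which one of the two matroids is partitional, that is, $N=\bigoplus_{i\in I}U_i$ is a direct sum of uniform matroids $U_i$ living on the parts of a partition $E=\bigsqcup_{i\in I}P_i$, with $U_i$ of some rank $r_i\le\lvert P_i\rvert$. So my first step is to invoke this reduction and thereafter assume that $N$ is partitional while $M$ is arbitrary. When the index set $I$ is finite this is precisely the theorem proved in the present paper; hence the entire remaining content of the conjecture lies in the case of infinitely many uniform summands, and that is where I would concentrate the effort.

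For infinitely many summands I would set up a transfinite construction governed by a wave-type invariant, in the spirit of Aharoni and Ziv. Well-order $I$ and build an increasing chain of partial solutions $(J^\alpha, J_M^\alpha, J_N^\alpha)$, where $J^\alpha$ is independent in both matroids, $J^\alpha=J_M^\alpha\sqcup J_N^\alpha$, and $\mathsf{span}_M(J_M^\alpha)\cup\mathsf{span}_N(J_N^\alpha)$ already contains every part $P_i$ with $i<\alpha$. To process a new part $P_\alpha$ one decides how many of its elements to commit to $J_N$ (respecting the capacity $r_\alpha$ of $U_\alpha$) and then reroutes the surplus into $J_M$ along an alternating structure, restoring $M$-independence. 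The invariant to preserve is that the covered region forms a wave, i.e.\ that it can be defended against every later demand without retracting earlier commitments; a maximal such wave, obtained via Zorn's lemma once waves are shown to be closed under suprema, would have to span all of $E$, since otherwise an uncovered element would yield an augmenting structure contradicting maximality.

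The crux, and the step I expect to be the genuine obstacle, is the behaviour at limit stages together with the verification that waves are closed under suprema. Unions of an increasing chain of common independent sets preserve independence in a finitary matroid but not for arbitrary $M$, and even where independence survives, the required global identity $\mathsf{span}_M(J_M)\cup\mathsf{span}_N(J_N)=E$ need not be inherited from its finite approximations: an element may fail to be spanned at every stage yet be forced into the span only in the limit. Controlling this demands a compactness or continuity property of the alternating and augmenting structures that is available for a finite partition but fails in general, which is exactly why the finite-summand argument of this paper does not extend verbatim. Closing the gap would, I believe, require either a topological compactness argument fashioned from the closure operators of $M$ and $N$, or a refined notion of wave provably stable under suprema; devising such a tool is the essential difficulty, and it is the reason Conjecture~\ref{MIC} remains open in full generality.
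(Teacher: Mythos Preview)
The statement you were asked to prove is not a theorem of the paper but its central open \emph{conjecture}. The paper does not prove Conjecture~\ref{MIC}; it proves only the special case recorded as Theorem~\ref{thm:main}, where $N$ is a direct sum of \emph{finitely} many uniform matroids. Consequently there is no ``paper's own proof'' to compare your proposal against.

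Your write-up is also not a proof, and you say so yourself: after invoking the Bowler--Carmesin reduction to partitional $N$ and noting that the finite-index case is handled by the present paper, you sketch a transfinite wave construction for the infinite-index case and then explicitly identify the step that fails --- closure of waves under suprema and control at limit stages when $M$ is non-finitary --- concluding that this ``is the reason Conjecture~\ref{MIC} remains open in full generality.'' That is an honest assessment of the state of the art, but it means what you have submitted is a research outline with an acknowledged gap, not a proof. The concrete missing idea is exactly the one you name: a compactness or continuity mechanism ensuring that the union of an increasing chain of partial solutions remains a common independent set with the required spanning property, in the absence of finitariness of $M$. No such mechanism is currently known, which is why the conjecture is still open.
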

The concept of infinite matroids described in the  first paragraph was not entirely satisfying for the experts in matroid 
theory. Indeed,  matroids may 
fail to have a dual although duality is a fundamental
phenomenon of the finite theory. 
More precisely, their formally defined duals fail to be a matroid. For example in an infinite connected graph the bonds are 
supposed 
to be 
the circuits of the 
dual of its cycle matroid because they are the minimal edge sets meeting with every base. But these bonds are usually not even 
finite.
Rado asked in 1966 for a more general 
definition of infinite matroids that allows infinite circuits (see 
\cite{rado1966abstract}*{Problem P531}).  Among other 
attempts Higgs \cite{higgs1969matroids}
introduced  a class of structures he 
called 
``B-matroids'' that solves Rado's problem. 
Oxley gave an axiomatization of B-matroids and showed that  this is the
 broadest class of structures satisfying some natural `matroid-like' axioms (namely \ref{item axiom1}-\ref{item axiom3} below) 
 for 
 which the usual definition of dual and minors are meaningful and results in a matroid   (see 
 \cite{oxley1978infinite} and \cite{oxley1992infinite}). 
 Despite these 
 discoveries 
 of Higgs and Oxley, the systematic investigation of B-matroids started only around 2010 when Bruhn, Diestel, Kriesell, 
 Pendavingh and Wollan  
found a 
set of cryptographic axioms for them, generalising the usual independent set-, bases-, circuit-, closure- and 
rank-axioms for finite matroids (see \cite{bruhn2013axioms}). They also showed that several well-known facts of the theory of 
finite 
matroids are preserved.  
Their axiomatization in the language of independent sets is the following:

 $ M=(E, \mathcal{I}) $ is a B-matroid (or simply matroid)  if $ \mathcal{I}\subseteq \mathcal{P}(E) $ with
\begin{enumerate}
[label=(\roman*)]
\item\label{item axiom1} $ \varnothing\in  \mathcal{I} $;
\item\label{item axiom2} $ \mathcal{I} $ is downward closed;
\item\label{item axiom3} For every $ I,J\in \mathcal{I} $ where  $J $ is $ \subseteq $-maximal in $ \mathcal{I} $ but $ I $ is 
not, there 
exists an
 $  e\in 
J\setminus I $ such that
$ I+e\in \mathcal{I} $;
\item\label{item axiom4} For every $ X\subseteq E $, any $ I\in \mathcal{I}\cap 
\mathcal{P}(X)  $ can be extended to a $ \subseteq $-maximal element of 
$ \mathcal{I}\cap \mathcal{P}(X) $.
\end{enumerate}

After this success of Rado's program the name `Matroid Intersection Conjecture' gained a new interpretation by 
applying 
the definition above  instead of the more restrictive infinite matroid concept that allows only finite circuits. Although several 
partial results have been obtained about the Matroid Intersection Conjecture (see 
\cite{aharoni1998intersection}, \cite{bowler2015matroid}, \cite{ghaderi2017}, \cite{aigner2018intersection},  
\cite{bowler2020almost}, \cite{joo2020MIC}), even its
original  version considering only finitary matroids remains wide open.

\subsection{Uniform matroids}
Let $ E $ be a (possibly infinite) set of size at least $ n\in \mathbb{N} $. Then the  subsets of $ E $ of size at most $ n $ are the 
independent sets of a matroid $ U_{E,n} $  which is called the $ n $-uniform 
matroid on $ E $.  Talking about $ \kappa $-uniform matroids for an infinite cardinal $ \kappa $ is not meaningful in general. 
Indeed,  if $ 
\left|E\right|>\kappa $, then the set of subsets of $ E $ of size at most $ \kappa $  fails to be the set of 
the independent sets of a matroid
 because there is no maximal among these sets which is a violation of  axiom 
 \ref{item axiom4} for $ I=\varnothing $ and $ X=E $.  The ``right'' concept of (potentially infinite)  uniform matroids 
was found by Bowler and Geschke:
\begin{defn}[\cite{bowler2016self}*{Definition 2}]\label{def} 
 A matroid $ (E,\mathcal{I}) $ is uniform if for every $ I\in \mathcal{I},\ e\in I $ and $ f\in E\setminus I $: $ I-e+f\in 
 \mathcal{I} $. 
\end{defn}
The matroids $ \{ U_{E,n}: n\in \mathbb{N} \} $ and their duals are uniform according to Definition \ref{def}.
 It is not too hard to show (see Proposition \ref{prop: fintaryUnif}) that if a uniform matroid is finitary 
then it is either free or $ n $-uniform for some $ n\in \mathbb{N} $. A 
natural question if there are uniform matroids that are neither finitary nor cofinitary\footnote{A matroid is cofinitary if its dual is 
finitary.}. The positive answer (under 
some set theoretic assumptions) was shown  by Bowler and Geschke 
in~\cite{bowler2016self}. They developed an efficient method to build uniform matroids while having a large degree of freedom 
during the construction.  Their technique turned out 
to be a powerful tool to prove relative consistency results in infinite matroid theory. Higgs showed in 
\cite{higgs1969equicardinality} that under the 
Generalized 
Continuum Hypotheses for every fixed matroid $ M $ the bases of $ M $ have the same cardinality and asked if it is provable 
already in set theory ZFC. The negative answer was shown by Bowler and Geschke which means that the question about the 
equicardinality of the bases  is undecidable in ZFC. Another example for an application of uniform matroids is 
corresponding to the following problem. Suppose that $ M $ and $ N $ are matroids on a common 
ground set and $ M $ has a base contained in a base of $ N $ and vice versa. Do $ M $ and $ N $ necessarily share a base? The 
positive answer is known for the special case where each of the matroids is either finitary or cofinitary 
\cite{erde2019base}*{Corollary 
1.4}. The unprovability of the general case \cite{erde2019base}*{Theorem 5.1} was demonstrated via the 
 method of Bowler and Geschke.

A question of undoubtedly great importance in the field is if the existence of uniform matroids that 
are neither finitary nor cofinitary is provable in ZFC alone. The existence of such a matroid  on a $ \kappa $-sized ground set 
is known to be equivalent with the positive answer for the following set theoretic question:
\begin{quest}\label{ques: uniexis}
Let $ \mathcal{P}(\kappa)/\mathrm{fin} $ be the power set of $ \kappa $ factorized by the equivalence relation 
$ \sim $ where $ X\sim Y $ iff the symmetric different $ X \vartriangle Y $ is finite. For $ [X], [Y]\in 
\mathcal{P}(\kappa)/\mathrm{fin} $ we let $ 
[X]\subseteq^{*}  [Y]$ iff $ X\setminus Y $ is finite. Is there a family 
$ \mathcal{A}\subseteq\mathcal{P}(\kappa)/\mathrm{fin} $  satisfying the 
following conditions?
\begin{enumerate}
\item\label{item: antichain} $ \mathcal{A} $ is an antichain, i.e., it consists of $ \subseteq^{*} $-incomparable elements;
\item\label{item: pairs} $ \mathcal{A} $ is line-dense in $ \mathcal{P}(\kappa)/\mathrm{fin} $, i.e., for every $ [X], [Y]\in 
\mathcal{P}(\kappa)/\mathrm{fin} $ with $ [X]\subseteq^{*}  [Y] $, there is an $ [A]\in 
\mathcal{A} $ such that at least one of the following holds
\begin{itemize}
\item $ [A]\subseteq^{*}  [X] $
\item $ [X]\subseteq^{*} [A] \subseteq^{*} [Y]  $
\item  $  [Y] \subseteq^{*} [A] $;
\end{itemize}
\item $ \mathcal{A} $ is non-trivial, i.e., $ \mathcal{A}$ is neither $ \{ [\varnothing] \}$ nor $ \{ [\kappa] \} $.
\end{enumerate}
\end{quest}

\subsection{Our main result}
The special case of Conjecture \ref{MIC} where $ N $ is assumed to be uniform is well-known and easy to prove
while the special case where $ N $ is the direct sum of uniform matroids (known as partitional matroids) is already 
equivalent 
with the conjecture itself (see \cite{bowler2015matroid}*{Corollary 3.9 (b)}).   Our main result is deciding the problem  between 
these two extremal cases:

\begin{restatable}{thm}{main}\label{thm:main}
Let $ M $ and $ N $ be matroids on  $ E $ such that  $ N=\bigoplus_{i<n}U_i $ where $ n\in 
\mathbb{N}$ and  $ U_i $ is a uniform 
matroid on  $ E_i $  with $ E_i\cap E_j=\varnothing $ for $ 0\leq i<j<n $. Then $ \{ M, N \} $ has the 
Intersection property.
\end{restatable}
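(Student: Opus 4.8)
The plan is to recast the Intersection property so that the product structure of $N$ becomes visible, and then to induct on $n$. Definition~\ref{def} yields at once the following dichotomy for a uniform matroid $U$ on a set $D$: if $J$ is independent in $U$, then either $J$ is a base of $U$ and $\mathsf{span}_U(J)=D$, or $J$ is not a base and $\mathsf{span}_U(J)=J$. (If $e\in D\setminus\mathsf{span}_U(J)$, then $J+e$ is independent, and for every $f\in D\setminus(J+e)$ the exchange $J+e-e+f=J+f$ is independent by Definition~\ref{def}, so $f\notin\mathsf{span}_U(J)$.) Hence for $I_N$ independent in $N=\bigoplus_{i<n}U_i$,
\[
\mathsf{span}_N(I_N)=I_N\cup\bigcup\{\,E_i:\ I_N\cap E_i\ \text{is a base of}\ U_i\,\}.
\]
As only this dichotomy is ever used, the ranks of the $U_i$ play no role, and the ``exotic'' uniform matroids of Bowler and Geschke are treated exactly like the finite-rank ones. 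I also use the standard restatement: $\{M,N\}$ has the Intersection property iff there are a set $A\subseteq E$, a base $B_M$ of the restriction of $M$ to $A$, and a base $B_N$ of the restriction of $N$ to $E\setminus A$, with $B_M\cup B_N$ independent in both $M$ and $N$; for one direction put $I_M:=B_M,\ I_N:=B_N$, and for the other put $A:=\mathsf{span}_M(I_M)$, using that $I_N$ is disjoint from $\mathsf{span}_M(I_M)$.

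\textbf{Induction on $n$.} For $n\le 1$ there is nothing to prove ($n=0$ forces $E=\varnothing$; $n=1$ is the known uniform case). Let $n\ge 2$ and assume the theorem for direct sums of fewer than $n$ uniform matroids. Put $D:=E_{n-1}$, $U:=U_{n-1}$, $\widehat E:=E\setminus D$ and $\widehat N:=\bigoplus_{i<n-1}U_i$, a direct sum of $n-1$ uniform matroids on $\widehat E$. The idea is to push $D$ entirely onto the $N$-side: take an $M$-independent base $B$ of $U$ (the set $D$ has a $U$-base by axiom~\ref{item axiom4}), contract it and restrict $M$ to $\widehat E$ to obtain a matroid $\widehat M$ on $\widehat E$, apply the inductive hypothesis to $\{\widehat M,\widehat N\}$ to get a witness $(I_M,I_N)$, and output $(I_M,\,I_N\cup B)$ for $\{M,N\}$. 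Then $B$ being a base of $U$ gives $D\subseteq\mathsf{span}_N(I_N\cup B)$; the set $(I_M\cup I_N)\cup B$ is $M$-independent because $I_M\cup I_N$ is independent in $M/B$ and $B$ is independent in $M$; $N$-independence splits over $D$ and $\widehat E$; and provided contracting $B$ did not enlarge the $M$-span inside $\widehat E$, i.e.\ $\mathsf{span}_M(I_M\cup B)\cap\widehat E=\mathsf{span}_M(I_M)\cap\widehat E$, the $\widehat E$-coverage of $(I_M,I_N)$ carries over verbatim.

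\textbf{The obstacle, and waves.} The real difficulty is the italicised proviso. A $U$-base $B$ appended to the $N$-side is disjoint from $\mathsf{span}_M(I_M)$, so contracting it generically \emph{does} pull new points of $\widehat E$ into the $M$-span, leaving them uncovered in the lift; dually, a witness for the pair on $\widehat E$ returned by the inductive hypothesis may admit no $U$-base of $D$ extending it to a common independent set of $\{M,N\}$ --- and if the restriction of $M$ to $D$ has small rank, no $U$-base of $D$ is even $M$-independent. This is precisely the phenomenon that the \emph{wave} technique of Aharoni--Ziv, in the form used by Bowler and Carmesin, is built to control. I would introduce waves for the ordered pair $(M,N)$ --- common independent sets $I$ inside sets $W$ with $W\subseteq\mathsf{span}_N(I)$ that cannot be ``pushed towards $M$'' inside $W$ --- show that they are closed under a suitable supremum, so that a maximal wave $(W^{\ast},I^{\ast})$ exists, and peel it off, passing to the core $E\setminus W^{\ast}$ with $M$, $N$ restricted and contracted accordingly. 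On the wave-free core the pathology cannot occur: wave-freeness is exactly what forces a suitable $U$-base of $D$ (or, when none is $M$-independent, the $M$-covered part of $D$, which is instead \emph{deleted}) to yield a coverage-preserving reduction to an instance $\{\widehat M,\widehat N\}$ on part of $\widehat E$, where the inductive hypothesis applies; gluing the lifted core solution onto a suitable bipartition of $I^{\ast}$ then produces the desired witness for $\{M,N\}$.

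\textbf{Where the work lies.} The induction on $n$ is mere scaffolding; the substance is the wave analysis in the step --- establishing that waves for $(M,N)$ form a complete lattice, so that a maximal wave exists, and that such a wave leaves a core on which the cross-$D$ reduction does not leak coverage. The product structure of $N$ enters only through the uniform dichotomy above, which keeps the treatment of $D$ finite, and the finiteness of $n$ is what makes the peeling terminate inside the induction.
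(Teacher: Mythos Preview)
Your proposal is a plan rather than a proof: you correctly identify the obstacle (contracting a $U$-base of $D$ may drag points of $\widehat E$ into the $M$-span, so the lifted witness need not cover them), and you propose to cure it with the Aharoni--Ziv/Bowler--Carmesin wave machinery, but you do not carry that machinery out. Two concrete gaps. First, the existence of a maximal wave: for arbitrary (non-finitary, non-cofinitary) matroids, the union of a chain of common independent sets need not be independent, so ``closed under a suitable supremum'' is not automatic and you give no argument tailored to the present $N$. Second, and more seriously, the assertion that on the wave-free core ``wave-freeness is exactly what forces a suitable $U$-base of $D$ \ldots\ to yield a coverage-preserving reduction'' is the entire content of the step and is stated without justification; it is not at all clear why wave-freeness of the pair $(M,N)$ should prevent the leakage you yourself diagnosed. (There is also a small slip earlier: ``the set $D$ has a $U$-base by axiom~\ref{item axiom4}'' does not give you an $M$-independent $U$-base; your later parenthetical acknowledges this but the alternative branch is not developed.)

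The paper avoids waves altogether. Its induction on $n$ splits on whether some nonempty union $W$ of the $E_i$ carries an $N$-independent base of $M\upharpoonright W$; if so, contract $W$ and recurse. If not (Condition~\ref{cond}), it proves a stronger statement (Theorem~\ref{thm:side}): for every common independent $J$ there is an $M$-independent base $B$ of $N$ with $J\subseteq\mathsf{span}_M(B)$. The key device there is to pick a common independent $I$ that $M$-spans $J$ and is maximal with respect to the index set $\Theta(I)=\{i<n:E_i\subseteq\mathsf{span}_M(I)\}$, apply the inner induction on $E':=\bigcup_{i\in\Theta(I)}E_i$, and then extend via Lemmas~\ref{lem:extend} and~\ref{lem:char maximal}. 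This maximisation over $\Theta$ is precisely what replaces your wave analysis: it guarantees that the indices outside $\Theta(I)$ are forced onto the $N$-side after extension, which is the ``no leakage'' conclusion you were aiming for, but obtained by an elementary finite-combinatorial argument rather than by building and peeling a maximal wave.
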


 Having the unprovability results mentioned in the previous 
subsection in mind, we phrase the following question in a careful way:

\begin{quest}
Is it provable in ZFC that a pair $ \{ M,N \} $ of partitional matroids on a common countable ground set always has the 
Intersection property?
\end{quest}

\section{Notation and Preliminaries}
We apply the standard set theoretic convention that natural numbers are identified with the set of smaller natural numbers, i.e.,
$ n=\{ 0,\dots, n-1 \} $. For ease of presentation, when there is no chance of misunderstanding, we write simply $ X-y+z $ instead 
of $ X\setminus \{ y \}\cup \{ z \} $. The defined terms and symbols are highlighted with italic and  bold respectively for 
convenience.

\subsection{General matroidal terms}
A pair ${M=(E,\mathcal{I})}$ is a \emph{matroid} if ${\mathcal{I} \subseteq \mathcal{P}(E)}$ satisfies  the axioms 
\ref{item axiom1}-\ref{item axiom4}. A matroid is \emph{trivial} if $ E=\varnothing $ and \emph{free} if $ 
\mathcal{I}=\mathcal{P}(E) $.
The sets in~$\mathcal{I}$ are called \emph{independent} while the elements of $ \mathcal{P}(E) \setminus \mathcal{I} $ are 
the \emph{dependent} sets. The maximal independent sets are the \emph{bases}.

\begin{fact}[\cite{bruhn2013axioms}*{Lemma 3.7}]\label{fact: equicBase}
If matroid $ M $ has a finite base, then every base of $ M $ has the same size.\footnote{The equicardinality of bases can be 
proved for some larger classes of matroids but as already mentioned  is  independent of set theory ZFC considering the class of all 
matroids.}
\end{fact}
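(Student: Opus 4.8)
The plan is to use only the augmentation axiom \ref{item axiom3} together with the maximal-extension axiom \ref{item axiom4}, exploiting the finiteness of the given base at every step to keep all auxiliary processes finite. First I would reduce the statement to a comparison of bases: by \ref{item axiom4} applied with $X=E$ every independent set extends to a base, and by \ref{item axiom2} every subset of a base is independent, so it suffices to show that a single finite base $B_0$ with $\lvert B_0\rvert=n$ forces every base $B$ to satisfy $\lvert B\rvert=n$.

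The engine is the following finite augmentation routine extracted from \ref{item axiom3}: if $J$ is a base and $I$ is independent but not maximal, then some $e\in J\setminus I$ has $I+e\in\mathcal I$; iterating with the \emph{fixed} base $J=B_0$ enlarges $I$ only by elements of $B_0\setminus I$, so after at most $n$ steps the set becomes maximal, and hence every independent set $I$ extends to a base contained in $I\cup B_0$. I would then establish the two inequalities separately. For $\lvert B\rvert\ge n$: starting from the common part $B_0\cap B$ and augmenting towards $B$ via \ref{item axiom3} with $J=B$ shows that $B_0\cap B$ can be grown inside $B$ up to $B$ itself, which I would pair with a single-element exchange to inject $B_0\setminus B$ into $B\setminus B_0$. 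For $\lvert B\rvert\le n$: here I would run the exchange in the other direction, converting $B_0$ into $B$ by a finite sequence of single swaps $B_0-x+y$ (with $x\in B_0\setminus B$, $y\in B\setminus B_0$) each of which remains a base; because $B_0$ is finite this sequence terminates and yields an injection of $B\setminus B_0$ into the finite set $B_0\setminus B$, whence $\lvert B\setminus B_0\rvert\le n$ and $\lvert B\rvert\le n$. Combining the two inequalities gives $\lvert B\rvert=n$.

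The main obstacle is the upper bound when $B$ is infinite: axiom \ref{item axiom3} can only \emph{enlarge} a non-maximal set and never certifies that a maximal set (such as $B_0$) can be enlarged, so one cannot directly compare two bases by augmenting either of them. The delicate point is therefore to manufacture, from \ref{item axiom3} alone, a single-element exchange $B_0-x+y$ between the two bases, and to do so injectively, so that each of the a priori unboundedly many elements of $B\setminus B_0$ is matched with a distinct element of the finite set $B_0\setminus B$; it is exactly here that the finiteness of $B_0$ is indispensable, preventing the infinite base $B$ from outrunning the finite budget of $B_0$.
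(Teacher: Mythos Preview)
The paper does not prove this fact at all; it is quoted with a citation to Lemma~3.7 of \cite{bruhn2013axioms} and used as a black box. So there is no in-paper argument to compare against, and your proposal stands or falls on its own.

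Your exchange strategy in the second half is the right one and in fact already yields $\lvert B\rvert=n$ outright, so the separate lower-bound paragraph (and the ``engine'' about extending inside $I\cup B_0$) are superfluous. The clean iteration is: while $B_i\setminus B\neq\varnothing$, pick $x\in B_i\setminus B$; since $B_i-x\subsetneq B_i$ is independent but not maximal, axiom~\ref{item axiom3} with $J=B$ produces $y\in B\setminus(B_i-x)$ with $B_{i+1}:=B_i-x+y\in\mathcal I$, and necessarily $y\in B\setminus B_i$ because $x\notin B$. Then $\lvert B_{i+1}\setminus B\rvert=\lvert B_i\setminus B\rvert-1$, so after $k=\lvert B_0\setminus B\rvert\le n$ steps $B_k\subseteq B$, hence $B_k=B$ and $\lvert B\rvert=\lvert B_k\rvert=\lvert B_0\rvert=n$.

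The one genuine gap in your outline is the clause ``each of which remains a base''. You correctly flag this as the delicate point but never say how to establish it, and without it the iteration might produce a non-maximal $B_{i+1}$ to which the next step cannot be applied as written. The missing step is a \emph{second} use of axiom~\ref{item axiom3}, this time with the maximal set $J=B_i$: if $B_{i+1}=B_i-x+y$ were not maximal, there would exist $e\in B_i\setminus B_{i+1}=\{x\}$ with $B_{i+1}+e=B_i+y\in\mathcal I$, contradicting the maximality of $B_i$. Once this one-line lemma is inserted, the argument is complete; note that it uses only that $\lvert B_i\setminus B_{i+1}\rvert=1$, so the finiteness of $B_0$ is needed solely for termination of the outer loop, exactly as you say.
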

Every dependent set contains a minimal dependent set, these are called 
\emph{circuits}. For an  ${X \subseteq E}$, ${\boldsymbol{M  \upharpoonright X} :=(X,\mathcal{I} 
\cap \mathcal{P}(X))}$ is a matroid referred as the \emph{restriction} of~$M$ to~$X$.
We write ${\boldsymbol{M - X}}$ for $ M  \upharpoonright (E\setminus X) $  and call it the minor obtained by the 
\emph{deletion} of~$X$. 
Let $ B_X $ be a maximal independent subset of $ X $. The \emph{contraction} $ \boldsymbol{M/X} $ of $ X $ in $ M $ is the  
matroid on $ E\setminus X $ where 
$ I\subseteq E\setminus X $ is independent iff $ I\cup B_X $ is independent in $ M $. One can show that the definition does not 
depend on the choice of $ B_X $.  
Contraction and deletion commute, i.e., for 
disjoint 
$ X,Y\subseteq E $, we have $ (M/X)-Y=(M-Y)/X $. Matroids of this form are the  \emph{minors} of~$M$.
The set $ \mathsf{span}_{M}(X) $  of edges spanned by $ X $ in $ M $ consists of the edges in $ X $ and of those $ e\in 
E\setminus X $ for which $ \{ e \} $ is dependent  in $ M/X $. The operator $ \mathsf{span_M} $ is idempotent  and 
hence (since extensivity and monotonicity are straightforward from the definition)  is a closure operator.  An $ S\subseteq E $ is 
\emph{spanning} in $ M $ if $ 
\mathsf{span}_M(S)=E $.  Bases are exactly the independent spanning sets.

Let  $\Theta $ be some index set. For $ i\in \Theta $, let  $ M_i  $ be a matroid on $ E_i $ such that $ E_i\cap E_j=\varnothing $ 
for $ i\neq j $. Then the 
\emph{direct sum} 
$ \boldsymbol{\bigoplus_{i\in \Theta}M_i} $ is the matroid defined on $ E:=\bigcup_{i\in \Theta}E_i$ where $ I\subseteq E$  is 
independent in $  \bigoplus_{i\in 
\Theta}M_i$ iff $ I\cap E_i $ is independent in $ M_i $ for every $ i\in  \Theta $. For a detailed introduction to the theory of 
infinite matroids we refer to  \cite{nathanhabil}.

\subsection{Basic facts about uniform matroids}

We will make use of the following characterisation of uniform matroids (recall Definition \ref{def}):
\begin{prop}\label{prop:uniform}
A matroid $ U $ on $ E $ is uniform if and only if  for every $ F\subseteq E $, $ F $ is either independent or  spanning in $ U $.
\end{prop}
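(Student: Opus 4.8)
The plan is to prove the two implications separately, using throughout that $\mathsf{span}_U$ is a closure operator together with the elementary description of the span: for an independent set $J$ and an element $g\notin J$ one has $g\in\mathsf{span}_U(J)$ if and only if $J+g$ is dependent, because every circuit contained in $J+g$ must pass through $g$.

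For the ``only if'' direction, assume $U$ is uniform in the sense of Definition~\ref{def} and let $F\subseteq E$ be dependent; I must show that $F$ is spanning. Using axiom~\ref{item axiom4} (or simply picking a circuit inside $F$), fix a maximal independent set $I\subseteq F$. Since $F$ is dependent, $I\neq F$, so I may choose $e\in F\setminus I$, and then $I+e$ is dependent. I claim $\mathsf{span}_U(I)=E$. If not, pick $f\in E\setminus\mathsf{span}_U(I)$; then $I+f$ is independent, and $f\neq e$ since $e\in\mathsf{span}_U(I)$ while $f\notin\mathsf{span}_U(I)$. Applying Definition~\ref{def} to the independent set $I+f$, the element $f\in I+f$ and the element $e\in E\setminus(I+f)$, we get $(I+f)-f+e=I+e\in\mathcal{I}$, contradicting the dependence of $I+e$. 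Hence $\mathsf{span}_U(I)=E$, and since $I\subseteq F$ and $\mathsf{span}_U$ is monotone, $\mathsf{span}_U(F)=E$, i.e., $F$ is spanning.

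For the ``if'' direction, assume every subset of $E$ is independent or spanning, and let $I\in\mathcal{I}$, $e\in I$ and $f\in E\setminus I$; I must show $I-e+f\in\mathcal{I}$. Suppose not, so that $F:=I-e+f$ is dependent and therefore spanning by assumption. Put $J:=I-e$, which is independent, and observe that $f\notin J$ and $e\notin J$, so $F=J+f$ with $F$ dependent and $J$ independent; hence $f\in\mathsf{span}_U(J)$, whence $F=J+f\subseteq\mathsf{span}_U(J)$ and, by idempotence and monotonicity of the closure operator, $\mathsf{span}_U(F)=\mathsf{span}_U(J)$. On the other hand, $(I-e)+e=I$ is independent, so $e\notin\mathsf{span}_U(I-e)=\mathsf{span}_U(J)=\mathsf{span}_U(F)$; thus $F$ is not spanning, a contradiction. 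Therefore $I-e+f\in\mathcal{I}$, as required.

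The argument is short and is essentially bookkeeping with spans; the only point that needs a little care is keeping track of which element lies outside the relevant span — namely $e$ in both directions — so that in the ``only if'' part Definition~\ref{def} is applied with a genuinely new element, and in the ``if'' part the failure of spanning is correctly certified. The trivial case $E=\varnothing$ and the free case require no separate treatment, since then no set is dependent and both sides of the equivalence hold vacuously.
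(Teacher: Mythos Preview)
Your proof is correct. The ``only if'' direction is essentially the paper's argument with the roles of the letters $e$ and $f$ interchanged: both take a maximal independent subset of $F$ and use a single application of Definition~\ref{def} to derive a contradiction.

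The ``if'' direction, however, is genuinely different from the paper's. The paper argues that the spanning set $I-e+f$ contains a base $B$, notes $f\in B$ and picks $h\in (I-e+f)\setminus B$, and then contracts $B-f$ to obtain a matroid in which $\{f\}$ is a base while $\{e,h\}$ is independent, contradicting Fact~\ref{fact: equicBase} (equicardinality of finite bases). Your argument is more elementary: you observe that $J:=I-e$ already has the same span as $F=J+f$ (since $f\in\mathsf{span}_U(J)$), and then that $e\notin\mathsf{span}_U(J)$ because $J+e=I$ is independent; hence $F$ cannot be spanning. This avoids any appeal to contraction or to Fact~\ref{fact: equicBase} and uses only the closure-operator properties of $\mathsf{span}_U$, so it is a cleaner route. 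The paper's approach, on the other hand, makes the obstruction quantitative (a rank mismatch after contraction), which is perhaps more in the spirit of how Fact~\ref{fact: equicBase} is used elsewhere.
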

\begin{proof}
Suppose that $ U $ is uniform and let $ F\subseteq E $ be arbitrary.  
We take   be a maximal independent subset $ B $ of $ F $. We may assume that $ B\subsetneq F $ since otherwise $ F=B $ is 
independent and we are done.  Suppose for a contradiction that $ B $ is not a base of $ U $. Then 
there is some $ e\in E\setminus B $ such that $ B+e $ is independent. By the choice of $ B $ we know that $ e\in E\setminus F $. 
Let $ f\in F\setminus B $ be arbitrary. By  uniformity (Definition \ref{def}), $ B+f $ must be independent, since we can obtain it 
from $ B+e $ by deleting $ e $ and adding $ f $. But then $ B+f $ is an independent subset of $ F $ contradicting the maximality 
of  $ B $. Thus $ B $ is a base and therefore $ F $ is spanning.

Conversely, assume that every $ F\subseteq E $ is either independent or spanning  in the matroid $ U=(E, \mathcal{I}) $. 
Suppose for a 
contradiction that there are $ I\in \mathcal{I},\ e\in I $ and $ f\in E\setminus I $ such that $ I-e+f\notin \mathcal{I} $. Then by 
assumption $ I-e+f $ is spanning, thus contains some  base $ B $.  We must have $ f\in B $ since otherwise $ B\subseteq 
I-e\subsetneq I $  contradicts the fact that $ I $ is independent. We also know that $ B\subsetneq I-e+f $ because $  I-e+f $ 
is dependent, therefore we can fix some $ h\in (I-e+f)\setminus B $. But then
$ \{ f \} $ is a base of $ U/(B-f) $ and $ \{ e, h \}\in \mathcal{I}_{U/(B-f)} $ because $ B-f+e+h\subseteq I\in \mathcal{I} $ 
which contradicts Fact \ref{fact: equicBase} after 
extending  $ \{ e, h 
\} $ to a base of $ U/(B-f) $.
\end{proof}
\begin{cor}
The class of uniform matroids is closed under duality and under taking minors.
\end{cor}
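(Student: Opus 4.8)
The plan is to derive both closure statements from Proposition \ref{prop:uniform} (for duality) and directly from Definition \ref{def} (for minors).

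For duality, take a uniform matroid $U$ on $E$ with dual $U^{*}$. I would first recall the standard duality facts (see \cite{bruhn2013axioms}): for $F\subseteq E$, the set $F$ is independent in $U^{*}$ iff $E\setminus F$ is spanning in $U$, and $F$ is spanning in $U^{*}$ iff $E\setminus F$ is independent in $U$. Then, given an arbitrary $F\subseteq E$, Proposition \ref{prop:uniform} applied to $U$ tells us that $E\setminus F$ is either independent or spanning in $U$; in the former case $F$ is spanning in $U^{*}$, in the latter $F$ is independent in $U^{*}$. Hence every subset of $E$ is independent or spanning in $U^{*}$, and a second application of Proposition \ref{prop:uniform} shows that $U^{*}$ is uniform.

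For minors it is enough to show that deletion and contraction each preserve uniformity, since a minor of $U$ has the form $(U/X)-Y$. For a deletion $U-Y$: if $I$ is independent in $U-Y$, $e\in I$ and $f\in(E\setminus Y)\setminus I$, then $f\in E\setminus I$, so $I-e+f$ is independent in $U$ by Definition \ref{def}, and since it is disjoint from $Y$ it is independent in $U-Y$. For a contraction $U/X$, fix a maximal independent $B_X\subseteq X$; if $I$ is independent in $U/X$, $e\in I$ and $f\in(E\setminus X)\setminus I$, then $I\cup B_X$ is independent in $U$ with $e\in I\cup B_X$ and $f\notin I\cup B_X$ (because $f\notin I$ and $f\notin X\supseteq B_X$), so by Definition \ref{def} the set $(I\cup B_X)-e+f=(I-e+f)\cup B_X$ is independent in $U$, which is exactly the statement that $I-e+f$ is independent in $U/X$.

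I do not expect a genuine obstacle here: the whole point is that Proposition \ref{prop:uniform} packages uniformity in a form that is manifestly symmetric under complementation, and Definition \ref{def} survives the deletion and contraction constructions verbatim. The only delicate point is the (standard) input that the dual and minors of a B-matroid are again B-matroids with the familiar independent/spanning descriptions, as established in \cite{bruhn2013axioms}. As an alternative to the explicit contraction check, one could use $U/X=(U^{*}-X)^{*}$ together with the deletion case and the duality statement just proved.
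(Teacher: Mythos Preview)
Your argument is correct. The paper states this corollary without proof, treating it as immediate from Proposition~\ref{prop:uniform}; your duality argument is exactly the intended one, and for minors your direct verification of Definition~\ref{def} (or the alternative via $U/X=(U^{*}-X)^{*}$ that you mention) is an equally valid way to fill in the omitted details.
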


\begin{cor}\label{cor: MbaseBcontains}
If $ M $ and $ U $ are matroids on $ E $ and $ U $ is uniform, then  there is either an $ M $-independent base of $ U $ 
or  an $ U 
$-independent base of $ M $.
\end{cor}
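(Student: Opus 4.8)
\textit{Proof plan.} The idea is to read off the dichotomy directly from Proposition \ref{prop:uniform} applied to a single base of $M$, instead of attempting to manufacture a maximal common independent set of $M$ and $U$ (the latter being awkward in the $B$-matroid setting, where independence need not survive unions of chains).

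First I would fix a base $B$ of $M$; it exists by axiom \ref{item axiom4} with $I=\varnothing$ and $X=E$. By Proposition \ref{prop:uniform}, the set $B$ is either independent in $U$ or spanning in $U$. In the first case $B$ is itself a $U$-independent base of $M$, so we are done. In the second case I would use axiom \ref{item axiom4} (with $X=B$, working in $U$) to extend $\varnothing$ to a $\subseteq$-maximal $U$-independent subset $B'$ of $B$. Since $B'$ is $\subseteq$-maximal $U$-independent inside $B$, every element of $B\setminus B'$ lies in $\mathsf{span}_U(B')$; hence $B\subseteq \mathsf{span}_U(B')$, and by monotonicity together with idempotency of the closure operator $\mathsf{span}_U(B')=\mathsf{span}_U(B)=E$. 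Thus $B'$ is an independent spanning set of $U$, i.e.\ a base of $U$, and since $B'\subseteq B$ with $B$ independent in $M$, the set $B'$ is $M$-independent. So $B'$ is an $M$-independent base of $U$.

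There is no genuinely hard step here — the statement really is a corollary of Proposition \ref{prop:uniform}. The only point needing a line of justification is that a $\subseteq$-maximal $U$-independent subset of the spanning set $B$ is again a base of $U$, which is exactly the span computation above; everything else is immediate from the axioms and from the fact that bases are precisely the independent spanning sets. In particular the argument avoids any Zorn-type construction over common independent sets.
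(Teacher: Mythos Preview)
Your proof is correct and follows essentially the same approach as the paper: take a base $B$ of $M$, apply Proposition~\ref{prop:uniform} to $B$, and in the spanning case extract a base of $U$ from inside $B$. The only difference is that the paper compresses your last paragraph into the single clause ``thus $B$ contains a base of $U$ which is then independent in $M$,'' whereas you spell out via axiom~\ref{item axiom4} and a span computation why a maximal $U$-independent subset of the spanning set $B$ is necessarily a base of $U$.
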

\begin{proof}
Let $ B $ be an arbitrary base of $ M $. If $ B $ is independent in $ U $, then we are done. Otherwise  $ B $ must be spanning in 
$ U $, thus $ B $ contains a base of $ U $ which is then independent in $ M $.
\end{proof}
\begin{prop}\label{prop: fintaryUnif}
If the uniform matroid $ U=(E, \mathcal{I}) $ is finitary, then $ U $ is either free or of the form  $ U_{E,n} $ for suitable $ n\in 
\mathbb{N} $.
\end{prop}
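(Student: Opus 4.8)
The plan is to split into two cases according to whether $ U $ is free. If $ \mathcal{I}=\mathcal{P}(E) $ there is nothing to prove, so assume $ U $ is not free. Then $ U $ has a dependent set and hence a circuit $ C $; since $ U $ is finitary, $ C $ is finite, and I set $ n:=\left|C\right|-1 $. The goal is then to show $ U=U_{E,n} $, i.e.\ that $ \mathcal{I} $ is exactly the family of subsets of $ E $ of size at most $ n $.

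First I would produce a base of $ U $ of size $ n $. By Proposition \ref{prop:uniform} the dependent set $ C $ is spanning, i.e.\ $ \mathsf{span}_U(C)=E $. For any $ e\in C $ the set $ C-e $ is independent, being a proper subset of a circuit, and since $ C $ is a circuit we have $ e\in\mathsf{span}_U(C-e) $; by monotonicity and idempotence of the closure operator this gives $ \mathsf{span}_U(C-e)=\mathsf{span}_U(C)=E $. Hence $ C-e $ is an independent spanning set, i.e.\ a base, of size $ n $. By Fact \ref{fact: equicBase} every base of $ U $ then has size $ n $, and since (by axiom \ref{item axiom4}) every independent set extends to a base, every independent set has size at most $ n $.

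For the converse direction I would show that every $ F\subseteq E $ with $ \left|F\right|\le n $ is independent. Fix such an $ F $ and let $ B $ be a maximal independent subset of $ F $ (it exists by axiom \ref{item axiom4}). If $ B\subsetneq F $ then $ \left|B\right|<\left|F\right|\le n $, so $ B $ is not a base; extend $ B $ to a base $ B' $ and pick $ e\in B'\setminus B $, so that $ B+e\subseteq B' $ is independent. Choose $ f\in F\setminus B $. Since $ B+f $ is dependent (maximality of $ B $) whereas $ B+e $ is independent, we get $ f\neq e $, hence $ f\in E\setminus(B+e) $. Now uniformity (Definition \ref{def}), applied to the independent set $ B+e $, its element $ e $, and $ f\in E\setminus(B+e) $, yields that $ (B+e)-e+f=B+f $ is independent, contradicting the maximality of $ B $ as an independent subset of $ F $. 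Therefore $ B=F $ and $ F $ is independent.

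Combining the two directions gives $ \mathcal{I}=\{F\subseteq E:\left|F\right|\le n\} $, which is exactly the independent-set family of $ U_{E,n} $, so $ U=U_{E,n} $. There is no real obstacle in this argument; the only place needing a small idea is the exchange step in the previous paragraph, and the sole role of the finitary hypothesis is to force the circuit $ C $ — equivalently a base of $ U $ — to be finite, which is precisely what makes Fact \ref{fact: equicBase} applicable (indeed $ U_{E,n}^{*} $ for infinite $ E $ is a uniform, non-free, non-finitary matroid, so finiteness cannot be dropped).
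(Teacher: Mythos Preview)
Your proof is correct and follows the same overall strategy as the paper's: use the finitary hypothesis to produce a finite base, apply Fact~\ref{fact: equicBase} to pin down the common base size $n$, and then let uniformity force $U=U_{E,n}$. The only difference is cosmetic: the paper shows in one stroke that every $n$-element set is a base (by applying Proposition~\ref{prop:uniform} to such a set and comparing sizes), whereas you establish the two inclusions for $\mathcal{I}$ separately, and your exchange argument for ``every set of size $\le n$ is independent'' is essentially a reprise of the forward direction of the paper's proof of Proposition~\ref{prop:uniform}.
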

\begin{proof}
If every finite set is independent in $ U $ then it has no circuits (because every circuit of $ U $ is finite by assumption), thus $ U $ 
is free. If $ F\subseteq E $ is a finite dependent set, then $ F $ is spanning  by Proposition \ref{prop:uniform}, thus there is a 
finite 
base $ B\subseteq F $. 
But then Fact \ref{fact: equicBase} ensures that every base has size $ \left|B\right|=:n $. If an $ H\subseteq E $  of size $ n $ 
were not a base, then by applying Proposition \ref{prop:uniform} there would be a base $ B' $ such that either $ H\subsetneq B' $ 
or $ 
B'\subsetneq H $ holds in both of which cases $ \left|B'\right|\neq n$, a contradiction.  Thus every $ n $-element subset is a base 
and  
therefore $ U=U_{E,n} $.
\end{proof}
\begin{prop}\label{prop: spanUni}
Assume that $ U $ is a uniform matroid on $ E $, $ I $ is independent in $ U $ and $ I $ spans $ e\in E\setminus I $ in $ U $. Then 
$ I $ must be a base of $ U $.
\end{prop}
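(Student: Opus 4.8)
The plan is to argue by contradiction: assuming $I$ is \emph{not} a base of $U$, I will produce a single element $f$ such that swapping $f$ for $e$ in an independent set is a legal uniform-exchange, and conclude that $I+e$ is independent — contradicting the hypothesis that $I$ spans $e$.

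First I would unwind the spanning hypothesis. Since $e\in\mathsf{span}_U(I)$ and $e\in E\setminus I$, by the definition of the span operator $\{e\}$ is dependent in $U/I$; as $I$ is itself a maximal independent subset of $I$, this says precisely that $I+e$ is dependent in $U$. Now suppose towards a contradiction that $I$ is not a base. By axiom \ref{item axiom4} applied with $X=E$ we may extend $I$ to a maximal independent set, i.e.\ a base $B$ of $U$, and since $I$ is not maximal we have $B\supsetneq I$; fix some $f\in B\setminus I$. By downward closure (axiom \ref{item axiom2}), $I+f\in\mathcal{I}$. Crucially, $f\neq e$, because $I+f$ is independent whereas $I+e$ is dependent.

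Now I would apply the uniformity axiom (Definition \ref{def}) to the independent set $I+f$: we have $f\in I+f$ and $e\in E\setminus(I+f)$ (the latter since $e\notin I$ and $e\neq f$), so uniformity yields that $(I+f)-f+e=I+e$ is independent, contradicting the previous paragraph. Hence $I$ must be a base.

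I do not expect a genuine obstacle here: the argument is a one-step exchange. The only point that needs care is verifying $f\neq e$, which is exactly what the spanning assumption provides and what legitimises the single application of the uniform-exchange axiom; everything else is routine bookkeeping with the axioms and with the definitions of span and contraction.
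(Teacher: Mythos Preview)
Your proof is correct. The only delicate point, as you note, is checking $f\neq e$, and you handle this cleanly by contrasting the independence of $I+f$ with the dependence of $I+e$.

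The paper takes a different and slightly shorter route: it invokes Proposition~\ref{prop:uniform} (every subset of a uniform matroid is either independent or spanning) to conclude that the dependent set $I+e$ is spanning, whence $I$ is spanning since it spans $I+e$, and thus $I$ is a base. Your argument instead works directly from Definition~\ref{def} via a single exchange, avoiding Proposition~\ref{prop:uniform} altogether. The trade-off is that the paper's proof is a three-line appeal to an already established characterisation, whereas yours is self-contained from the axioms and the bare definition of uniformity; in that sense your version is more elementary and would even serve as an alternative proof of one direction of Proposition~\ref{prop:uniform} itself.
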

\begin{proof}
By assumption $ I+e $ is dependent and therefore also spanning by Proposition \ref{prop:uniform}. But then $ I $ is spanning 
because it spans the spanning set $ I+e $. Since $ I $ is independent as well, it is a base.
\end{proof}
\section{Proof of the main result}
\subsection{Preparatory lemmas}
\begin{obs}\label{obs:baseContained}
 If $ M $ and $ N $ are arbitrary matroids on $ E $ and there is an $ M $-independent base $ B_N $ of  $ N $ ($ N $-independent 
 base $ 
 B_M $ of  $ M $), then $ B_N $ ($ B_M $)  witnesses  the 
 Intersection property of $ \{ M, N \} $ via the trivial bipartition $ \varnothing\sqcup B_{N} $ ($  B_M\sqcup \varnothing $).
 \end{obs}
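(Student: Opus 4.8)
The plan is to verify the definition of the Intersection property directly for the proposed witness, since every required property is an immediate consequence of the hypotheses. I will treat the first formulation, in which $B_N$ is an $M$-independent base of $N$; the parenthetical variant follows verbatim by interchanging the roles of $M$ and $N$.

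First I would check that $B_N$ is a common independent set of $M$ and $N$. By hypothesis $B_N$ is independent in $M$, and being a base of $N$ it is a maximal independent set of $N$, hence in particular independent in $N$. Thus $B_N$ is independent in both matroids, which is precisely what ``common independent set'' demands.

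Next I would exhibit the bipartition $I_M := \varnothing$ and $I_N := B_N$, so that $I_M \sqcup I_N = \varnothing \sqcup B_N = B_N$ as asserted. The only clause of the Intersection property with any content is the spanning condition $\mathsf{span}_M(I_M) \cup \mathsf{span}_N(I_N) = E$. Here I would invoke the fact recorded just after the definition of $\mathsf{span}$ that bases are exactly the independent spanning sets: since $B_N$ is a base of $N$, we have $\mathsf{span}_N(I_N) = \mathsf{span}_N(B_N) = E$. Consequently
\[
\mathsf{span}_M(I_M) \cup \mathsf{span}_N(I_N) = \mathsf{span}_M(\varnothing) \cup E = E ,
\]
irrespective of the value of $\mathsf{span}_M(\varnothing)$, and the Intersection property of $\{M,N\}$ is witnessed as claimed.

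There is no genuine obstacle to overcome: the statement is a direct bookkeeping check against the definition of the Intersection property, and its value lies entirely in isolating this trivial witness so that it can be cited cleanly in the subsequent, more substantial arguments.
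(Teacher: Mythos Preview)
Your verification is correct and matches the paper's treatment: the observation is stated without proof because it is an immediate check against the definition of the Intersection property, exactly as you have written. There is nothing to add or amend.
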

From now on, let $ M $ and $ N $ be matroids on $ E $ such that  $ N=\bigoplus_{i<n}U_i $ where $ n\in 
 \mathbb{N} $ and  $ U_i $ is a uniform matroid on  $ E_i $  with $ E_i\cap E_j=\varnothing $ for $ 0\leq i<j<n $. 
The union of 
 an $ \subseteq $-increasing sequence of common independent sets of $ M $ and $ N $ is not a common independent set in 
 general because both $ M $ and $ N $ may have infinite circuits. Therefore Zorn's lemma cannot be used to extend common 
 independent sets to maximal ones as in the case of finitary matroids. Even so, such an extension is still always possible:
 
\begin{lem}\label{lem:extend}
Every common independent set $ I $ of $ M $ and $ N$ can be extended to a maximal common independent set.
\end{lem}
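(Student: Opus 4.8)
The plan is to prove, by induction on $n$, the statement \emph{"for every matroid $M$ on $E$ and every $N=\bigoplus_{i<n}U_i$ with each $U_i$ uniform on $E_i$, every common independent set of $M$ and $N$ extends to a maximal one"}, and to reduce each case immediately to the sub‑case $I=\varnothing$ by contraction. For the reduction, given a common independent set $I$, pass to $M/I$ and $N/I$ on $E\setminus I$. Since $I\in\mathcal{I}_M\cap\mathcal{I}_N$, the set $I$ is its own maximal independent subset in each matroid, so $X\subseteq E\setminus I$ lies in $\mathcal{I}_{M/I}$ (resp.\ $\mathcal{I}_{N/I}$) precisely when $X\cup I\in\mathcal{I}_M$ (resp.\ $X\cup I\in\mathcal{I}_N$); hence $X\mapsto X\cup I$ is a bijection between the common independent sets of $M/I,N/I$ and those of $M,N$ containing $I$, and it carries maximal sets to maximal sets. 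Moreover $N/I=\bigoplus_{i<n}\bigl(U_i/(I\cap E_i)\bigr)$ is again a direct sum of $n$ uniform matroids on the pairwise disjoint sets $E_i\setminus I$ (uniformity is inherited by minors, by the corollary following Proposition~\ref{prop:uniform}). So it is enough to show that $M,N$ as above admit \emph{some} maximal common independent set.

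For the induction the case $n=0$ is trivial, since then $E=\varnothing$. Assume $n\geq 1$, write $E'=\bigcup_{i<n-1}E_i$ and $N'=\bigoplus_{i<n-1}U_i$. First I apply Corollary~\ref{cor: MbaseBcontains} to the two matroids $M\upharpoonright E_{n-1}$ and $U_{n-1}$ on the ground set $E_{n-1}$, obtaining a set $J''\subseteq E_{n-1}$ that is a base of one of them and independent in the other. In particular $J''\in\mathcal{I}_M\cap\mathcal{I}_{U_{n-1}}$ and — this is the only feature of $J''$ I will use — for every $e\in E_{n-1}\setminus J''$ the set $J''+e$ is dependent in $U_{n-1}$ or dependent in $M$ (the former when $J''$ is a base of $U_{n-1}$, the latter when $J''$ is a base of $M\upharpoonright E_{n-1}$, using that dependence in $M\upharpoonright E_{n-1}$ of a subset of $E_{n-1}$ is the same as dependence in $M$). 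Next I apply the induction hypothesis to $\widehat M:=(M/J'')\upharpoonright E'$ and $\widehat N:=N'$, both matroids on $E'$, with $\widehat N$ a direct sum of $n-1$ uniform matroids; this yields a maximal common independent set $J'$ of $\widehat M,\widehat N$.

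I then claim $J:=J''\cup J'$ is a maximal common independent set of $M,N$. It is common independent: $J\cap E_{n-1}=J''\in\mathcal{I}_{U_{n-1}}$ and $J\cap E_i=J'\cap E_i\in\mathcal{I}_{U_i}$ for $i<n-1$, so $J\in\mathcal{I}_N$; and $J'\in\mathcal{I}_{\widehat M}$ means $J'\in\mathcal{I}_{M/J''}$, i.e.\ $J'\cup J''=J\in\mathcal{I}_M$ (here using $J''\in\mathcal{I}_M$). For maximality let $e\in E\setminus J$. If $e\in E_{n-1}$, then by the property of $J''$ either $(J+e)\cap E_{n-1}=J''+e$ is dependent in $U_{n-1}$, so $J+e\notin\mathcal{I}_N$, or $J''+e$ is dependent in $M$, so its superset $J+e$ is dependent in $M$. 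If $e\in E'$, then maximality of $J'$ forces $J'+e\notin\mathcal{I}_{\widehat N}$ or $J'+e\notin\mathcal{I}_{\widehat M}$; in the first case $(J+e)\cap E_i=(J'+e)\cap E_i$ is dependent in some $U_i$ with $i<n-1$, so $J+e\notin\mathcal{I}_N$; in the second case $J'+e\subseteq E'$ is dependent in $M/J''$, i.e.\ $(J'+e)\cup J''=J+e$ is dependent in $M$. In every case $J+e$ fails to be common independent, so $J$ is maximal.

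I expect the only delicate point — and the reason the naive approach of taking a $\subseteq$-maximal chain of common independent sets (and invoking Zorn) does not work here — to be that the auxiliary matroids $\widehat M$ and $\widehat N$ must live on a common ground set, which forces us to \emph{delete} $E_{n-1}\setminus J''$ when passing from $E$ to $E'$. What makes this harmless is exactly the dichotomy supplied by Corollary~\ref{cor: MbaseBcontains}: it certifies in advance that no element of $E_{n-1}\setminus J''$ could ever be added back to a common independent set containing $J''$, so discarding these elements costs nothing for maximality. Apart from that, the argument is just bookkeeping with minors and the direct‑sum structure of $N$; in particular it uses no transfinite recursion, and the finiteness of $n$ enters only to make the induction terminate.
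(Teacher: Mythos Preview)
Your proof is correct and follows essentially the same approach as the paper's: both arguments handle the blocks $E_i$ one at a time by invoking Corollary~\ref{cor: MbaseBcontains} to produce, inside each block, a common independent set that is a base of one of the two relevant matroids, and then observe that the accumulated set is maximal because every $e$ outside it is spanned in $M$ or in $N$. The only difference is packaging: you first contract $I$ to reduce to the case $I=\varnothing$ and then run a formal induction on $n$, whereas the paper keeps $I$ and phrases the same process as a direct iteration over $i<n$, applying Corollary~\ref{cor: MbaseBcontains} to $M/I\upharpoonright(E_i\setminus I)$ and $U_i/(I\cap E_i)$ at each step.
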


\begin{proof}
Let $ i<n $ be arbitrary.  By applying Corollary \ref{cor: MbaseBcontains} with $ M/I \upharpoonright (E_i\setminus I) $ and  $ 
U_i/(I\cap E_i) $ we obtain a common independent set $ B $ of these matroids  which is in addition a base in at least one of them. 
It 
follows directly from the construction that $ I\cup B\in \mathcal{I}_M\cap 
\mathcal{I}_N $, furthermore, $ 
I\cup B $ either $ M 
$-spans or $ N $-spans $ E_i $ depending on whose base was $ B $.  Iterating this with all indices yields a maximal 
common independent set.
\end{proof}
In the previous proof  we obtained a seemingly stronger property than maximality. Let us point out that 
it is actually equivalent:
\begin{lem}\label{lem:char maximal}
An $ I\in  \mathcal{I}_M\cap 
\mathcal{I}_N  $ is maximal in $ \mathcal{I}_M\cap 
\mathcal{I}_N $ if and only if every $ E_i $ is spanned by $ I $ in at least one of the matroids.
\end{lem}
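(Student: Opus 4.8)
The plan is to prove both implications, noting that one of them is essentially already established. For the direction ``spanning $\Rightarrow$ maximal'', I would argue exactly as in the proof of Lemma \ref{lem:extend}: if $I+e$ were a common independent set properly extending $I$, then $e\in E_i$ for some $i<n$; if $I$ spans $E_i$ in $M$ then $e\in\mathsf{span}_M(I)$, so $I+e\notin\mathcal{I}_M$; and if $I$ spans $E_i$ in $N$, then since $N=\bigoplus_{j<n}U_j$ we have $\mathsf{span}_N(I)\cap E_i=\mathsf{span}_{U_i}(I\cap E_i)$, whence $E_i\subseteq\mathsf{span}_{U_i}(I\cap E_i)$ and in particular $e\in\mathsf{span}_N(I)$, so $I+e\notin\mathcal{I}_N$. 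Either way we contradict $I+e\in\mathcal{I}_M\cap\mathcal{I}_N$.

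For the nontrivial direction ``maximal $\Rightarrow$ spanning'', I would argue by contraposition: assume there is an index $i<n$ such that $I$ spans $E_i$ in neither $M$ nor $N$, and produce a common independent set properly containing $I$. The crucial preliminary observation, which is where uniformity enters, concerns $U_i$ alone. Since $I\in\mathcal{I}_N$, the set $I\cap E_i$ is independent in $U_i$; since $I$ does not span $E_i$ in $N$, by the direct-sum description above $I\cap E_i$ is not spanning in $U_i$, hence (being independent) is not a base of $U_i$. Now I invoke Proposition \ref{prop: spanUni} in contrapositive form: a $U_i$-independent set that spans some element outside it would be a base, so a non-base independent set of $U_i$ spans \emph{no} element outside it; consequently $(I\cap E_i)+f\in\mathcal{I}_{U_i}$ for \emph{every} $f\in E_i\setminus I$. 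Translating back, $I+f\in\mathcal{I}_N$ for every $f\in E_i\setminus I$.

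It then remains to choose $f$ so that it is also $M$-independent over $I$. Since $I$ does not span $E_i$ in $M$, the set $E_i\setminus\mathsf{span}_M(I)$ is nonempty; pick any $g$ in it. As $I\cap E_i\subseteq\mathsf{span}_M(I)$ by extensivity, $g\in E_i\setminus I$, so $g\notin\mathsf{span}_M(I)$ gives $I+g\in\mathcal{I}_M$, while the previous paragraph gives $I+g\in\mathcal{I}_N$. Thus $I+g\in\mathcal{I}_M\cap\mathcal{I}_N$ properly extends $I$, contradicting maximality. The step I expect to carry the weight is the uniformity observation that a non-spanning independent set of $U_i$ can be augmented by \emph{any} external element: this is precisely what decouples the ``$N$-side'' extension from the specific choice of element, leaving us free to select an element that simultaneously works on the ``$M$-side''. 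The rest is bookkeeping with the direct-sum span formula and extensivity of $\mathsf{span}_M$.
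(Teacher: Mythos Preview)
Your proof is correct and follows essentially the same approach as the paper: both arguments hinge on Proposition~\ref{prop: spanUni} to conclude that when $I\cap E_i$ is not a base of $U_i$ it $N$-spans no element of $E_i\setminus I$, so any such element must be $M$-spanned. The only difference is cosmetic---the paper argues the ``only if'' direction directly while you phrase it as a contraposition---and your explicit write-up of the ``if'' direction (which the paper dismisses as straightforward) is accurate, if slightly over-elaborated.
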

\begin{proof}
The ``if'' direction is straightforward. To show the ``only if'' part let $ I $ be a maximal common independent set and let $ i<n $. 
On the 
one hand, every $ e\in E_i \setminus I $ is spanned by $ I $ in at least one of the matroids because $ I+e\notin  
\mathcal{I}_M\cap 
\mathcal{I}_N   $. On the 
other hand, either $ I\cap E_i $ is a base of $ U_i $ in which case $ E_i\subseteq \mathsf{span}_N(I) $, or $ I\cap E_i $ is not a 
base of $ U_i $ but then by Proposition \ref{prop: spanUni} $ I $ does not $ N $-span any edges from $ E_i\setminus I $ and 
hence we must have $ E_i\subseteq \mathsf{span}_M(I) $.
\end{proof}

\subsection{Proof of the main theorem}
Let us repeat here our main theorem for convenience:

\main*
We apply induction on $ n $. For $ n=0 $, both $ M $ and $ N $ are trivial and $ \varnothing $ witnesses the Intersection property 
of $ \{ M, N \} $ via $ 
  \varnothing\sqcup \varnothing $.
  
  Suppose now that  $ n\geq1 $. 
  Assume first that there is a nonempty $W\subseteq E $ which is the union of some of the sets $ E_i $  such that 
  $ M\upharpoonright W $ admits an  $ N $-independent base $ B $. 
  Then $ N/W $ can be written as the direct sum of strictly less than $ n $ uniform matroids. We apply the induction 
  hypothesis with  $ M/W $ and $ N/W $ to obtain  a witness $ I_{M/W}\sqcup I_{N/W}$ showing that $ \{M/W, N/W \} $ has 
  the Intersection property. We define  $ I_M:=I_{M/W}\cup B $ and $ I_N:= I_{N/W}$.   Then 
     \begin{gather*}
     I_M\sqcup I_N\in \mathcal{I}_M\cap \mathcal{I}_N\\ \text{ because }\\  I_{M/W}\sqcup I_{N/W}\in 
    \mathcal{I}_{M/W}\cap 
    \mathcal{I}_{N/W} \text{ and }\\ B\in \mathcal{I}_{M \upharpoonright W}\cap \mathcal{I}_{N 
    \upharpoonright W}. 
     \end{gather*}
    Since $ I_{M/W}\sqcup I_{N/W}$ shows the Intersection property of $ \{M/W, N/W \} $, set  $ 
    I_{M/W} $ spans  \[ E\setminus (W \cup  \mathsf{span}_{N/W}(I_{N/W}))  \] in $ M/W $.
    
    \begin{obs}\label{obs: N-W=N/W}
      $ N-W=N/W $ since
        \[ N= (N\upharpoonright W)\oplus (N-W). \] 
      \end{obs}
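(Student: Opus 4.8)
The plan is to establish the two parts of the observation in turn: first the displayed decomposition $N = (N \upharpoonright W) \oplus (N - W)$, and then $N - W = N/W$, which will follow from it by a general fact about direct sums.

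For the decomposition I would use that, by hypothesis, $W$ is a union of some of the blocks $E_i$. Put $J := \{ i < n : E_i \subseteq W \}$; since the $E_i$ are pairwise disjoint and cover $E$, we get $E \setminus W = \bigcup_{i\in n\setminus J} E_i$, so regrouping the summands of $N = \bigoplus_{i<n} U_i$ along the partition $n = J \sqcup (n\setminus J)$ gives $N = \big(\bigoplus_{i\in J} U_i\big) \oplus \big(\bigoplus_{i\in n\setminus J} U_i\big)$. A one-line check of the independence criterion for direct sums (a subset of $W$ meets $E_i$ in $\varnothing$ whenever $i\notin J$, and a subset of $E\setminus W$ meets $E_i$ in $\varnothing$ whenever $i\in J$) identifies $\bigoplus_{i\in J} U_i$ with $N \upharpoonright W$ and $\bigoplus_{i\in n\setminus J} U_i$ with $N - W$; this is the displayed equality.

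For the identity I would prove the general statement that if a matroid splits as $P = P_1 \oplus P_2$ with $P_1$ on $W$ and $P_2$ on $E\setminus W$, then $P/W = P - W$. Both minors live on $E\setminus W$, so it suffices to compare independent sets. A maximal $P$-independent subset $B_W$ of $W$ is a base of $P_1$, and for $I\subseteq E\setminus W$ the set $I\cup B_W$ is $P$-independent iff $B_W\in\mathcal{I}_{P_1}$ (which always holds) and $I\in\mathcal{I}_{P_2}=\mathcal{I}_{P-W}$. Hence $I$ is independent in $P/W$ exactly when it is independent in $P-W$. Taking $P = N$, $P_1 = N\upharpoonright W$, $P_2 = N - W$ yields $N/W = N - W$.

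I do not expect a genuine obstacle here: the statement is pure bookkeeping with direct sums, restrictions and contractions. The only point worth a moment's care is that $N/W$ is defined through a choice of maximal independent subset of $W$, but the computation above is independent of that choice (as it must be), so nothing extra needs to be verified.
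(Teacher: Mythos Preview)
Your proposal is correct and follows exactly the reasoning the paper intends: the observation is stated there without further proof, and your two steps (identifying the restriction and deletion with the two partial direct sums, then invoking the general fact that contraction and deletion of a direct summand agree) are precisely the details being left implicit. There is nothing to add.
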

    By $I_N=I_{N/W}  $ and by Observation \ref{obs: N-W=N/W}  we obtain  \[ 
   \mathsf{span}_{N/W}(I_{N/W})=\mathsf{span}_{N/W}(I_{N})=\mathsf{span}_{N-W}(I_{N})=\mathsf{span}_N(I_{N}). 
   \] Thus $ 
       I_{M/W} $ spans actually  \[ E\setminus (W\cup  \mathsf{span}_N(I_{N}))  \] in $ M/W $.
     But then,  since $ B $ is a base of $ M \upharpoonright W $, $ I_{M}=I_{M/W}\cup B $ spans $  E \setminus  
     \mathsf{span}_N(I_{N})  $ in $ M $. Therefore $ I_M\sqcup I_N $ witnesses the Intersection property of $ \{ M, N \} $.

  Suppose now that there is no such a $ W $, i.e., the following holds:
 \begin{cond}\label{cond}
 There is no nonempty $W\subseteq E $ which is the union of some of the sets $ E_i $  such that 
 $ M\upharpoonright W $ admits an  $ N $-independent base $ B $.
 \end{cond}
 
 In this case we can  finish the proof by applying the following theorem with $ J=\varnothing $:
 \begin{thm}\label{thm:side}
 Let $ M $ and $ N $ be matroids on  $ E $ such that  $ N=\bigoplus_{i<n}U_i $ where $ n\in 
 \mathbb{N}$ and  $ U_i $ is a uniform 
 matroid on  $ E_i $  with $ E_i\cap E_j=\varnothing $ for $ 0\leq i<j<n $. Assume that Condition 
 \ref{cond} holds. Then for every $ J\in \mathcal{I}_M\cap 
 \mathcal{I}_N $ there 
 exists an $ M $-independent base $ B $ of $ N $ with $J\subseteq \mathsf{span}_M(B) $.
 \end{thm}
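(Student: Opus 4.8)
The plan is to argue by induction on $n$, the number of uniform summands of $N$. For $n=0$ the ground set is empty and $B=\varnothing$ works. For $n=1$ one can finish directly: extend $J$ to a base $D$ of $M$; since $M$ has no $U_{0}$-independent base by Condition~\ref{cond} and $U_{0}$ is uniform, Proposition~\ref{prop:uniform} forces $D$ to be $U_{0}$-spanning, so if $B$ is a maximal $U_{0}$-independent subset of $D$ with $J\subseteq B$, then $B$ is a $U_{0}$-base (a maximal independent subset of a spanning set is a base), is $M$-independent because $B\subseteq D$, and contains $J$, whence $J\subseteq\mathsf{span}_{M}(B)$.

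For $n\geq 2$ the strategy is to peel off a contraction so that the induction hypothesis applies to a partitional matroid with strictly fewer summands. I would extend $J$ to a base $D$ of $M$ and, using Proposition~\ref{prop:uniform}, split the index set into $\mathcal B=\{\,i:D\cap E_{i}\text{ is }U_{i}\text{-spanning}\,\}$ and $\mathcal A=\{0,\dots,n-1\}\setminus\mathcal B$. For $i\in\mathcal B$ pick a maximal $U_{i}$-independent subset $B_{i}$ of $D\cap E_{i}$ containing $J\cap E_{i}$; each such $B_{i}$ is a $U_{i}$-base contained in $D$. If $\mathcal A=\varnothing$, then $B:=\bigcup_{i}B_{i}\subseteq D$ is an $M$-independent base of $N$ containing $J$ and we are done; and $\mathcal B\neq\varnothing$ whenever Condition~\ref{cond} holds, since otherwise $D$ would be an $N$-independent base of $M$, contradicting Condition~\ref{cond} with $W=E$. (Alternatively one may start from a maximal common independent set via Lemma~\ref{lem:extend} and split via Lemma~\ref{lem:char maximal}; the same dichotomy appears.) Thus in the remaining case $\varnothing\neq\mathcal B$ and $N\!\upharpoonright\!\bigcup_{i\in\mathcal A}E_{i}$ has fewer than $n$ summands.

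Now form the reduced instance $M':=\big(M/\!\bigcup_{i\in\mathcal B}B_{i}\big)-\bigcup_{i\in\mathcal B}(E_{i}\setminus B_{i})$ on $\bigcup_{i\in\mathcal A}E_{i}$, $N':=\bigoplus_{i\in\mathcal A}U_{i}$, and $J':=J\setminus\mathsf{span}_{M}\big(\bigcup_{i\in\mathcal B}B_{i}\big)$. Because $\bigcup_{i\in\mathcal B}B_{i}$ and $J'$ both lie inside the single $M$-independent set $D$, one checks that $J'$ is a common independent set of $(M',N')$; and, using the identities $\mathsf{span}_{M}(X\cup Y)=\mathsf{span}_{M}(X)\cup\mathsf{span}_{M/X}(Y)$ together with $J\cap E_{i}\subseteq B_{i}$ for $i\in\mathcal B$, one checks that any $M'$-independent base $B'$ of $N'$ with $J'\subseteq\mathsf{span}_{M'}(B')$ reassembles, via $B:=\bigcup_{i\in\mathcal B}B_{i}\cup B'$, into an $M$-independent base of $N$ with $J\subseteq\mathsf{span}_{M}(B)$. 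Here Condition~\ref{cond} with $W=E_{i}$ enters crucially: a base of $M\!\upharpoonright\!E_{i}$ is $U_{i}$-dependent, so one really must contract the $U_{i}$-bases $B_{i}$ and not larger sets, or the $E_{i}$-parts of $B$ would fail to be $U_{i}$-bases.

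The step I expect to be the genuine obstacle is verifying that the induction hypothesis actually applies to $(M',N')$, i.e.\ that Condition~\ref{cond} is inherited (or, failing that, producing the base of $N'$ in some other way): Condition~\ref{cond} for $(M,N)$ does not obviously descend to $(M',N')$, because the analogous ``lift'' of a counterexample back to $(M,N)$ is blocked by the familiar mismatch between $\mathsf{span}_{N}$ and $\mathsf{span}_{M}$ on the contracted components $\bigcup_{i\in\mathcal B}E_{i}$ (the sets $B_{i}$ span $E_{i}$ in $N$ but need not span it in $M$). Resolving this seems to require either choosing $D$ (or the maximal common independent set) more carefully so that this mismatch cannot occur, or interleaving the reduction above with the deletion/contraction reduction of Case~1 of Theorem~\ref{thm:main} — repeatedly stripping off the components that carry the obstruction until Condition~\ref{cond} is restored — while bookkeeping how $J$ is truncated at each stage and how the partial bases peeled off glue into an honest base of the remaining partitional matroid; this is where I would expect the bulk of the work, and the one point where the elementary span/contraction/direct-sum identities above do not suffice.
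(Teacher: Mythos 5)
There is a genuine gap, and it is exactly the one you flag yourself at the end: your reduction contracts the partial bases $B_i$ on the components in $\mathcal B$, and Condition~\ref{cond} does not survive contraction. Contracting makes sets \emph{harder} to be $M$-independent, so bases of $(M/\bigcup_{i\in\mathcal B}B_i)\upharpoonright W$ are smaller than bases of $M\upharpoonright W$ and may well become $N$-independent even when no violating $W$ existed for the original pair; nothing in your setup rules this out, so the induction hypothesis cannot be invoked for $(M',N')$. Your fallback of interleaving with the Case-1 reduction of Theorem~\ref{thm:main} does not repair this either, because that reduction only yields a witness to the Intersection property of the reduced pair, whereas Theorem~\ref{thm:side} demands the much more specific conclusion that the common independent set produced on $\bigcup_{i\in\mathcal A}E_i$ be a genuine base of $N'$; once some component is ``solved on the $M$ side'' you can no longer reassemble an $M$-independent base of all of $N$. (Your $n=0,1$ cases and the reassembly identities for $n\geq 2$ are fine; the proof founders only at this inheritance step.)

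The paper's proof avoids the problem by never contracting. For a common independent set $I$ it sets $\Theta(I):=\{i<n: E_i\subseteq\mathsf{span}_M(I)\}$ and chooses $I$ with $J\subseteq\mathsf{span}_M(I)$ whose $\Theta$ is \emph{stable}: no common independent set $K$ with $\mathsf{span}_M(K)\supseteq I$ has $\Theta(K)\supsetneq\Theta(I)$ (such an $I$ exists because $\Theta$ takes values in the finite set $\mathcal P(n)$, so iterated improvement terminates). The recursion is then applied to the \emph{restriction} of $M$ and $N$ to $E':=\bigcup_{i\in\Theta(I)}E_i$ --- note the split is by where $M$ spans, not by where $N$ spans as in your $\mathcal A/\mathcal B$ --- and Condition~\ref{cond} passes to such restrictions for free, since a violating $W\subseteq E'$ is verbatim a violating $W$ for $(M,N)$. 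This yields an $M\upharpoonright E'$-independent base $B'$ of $N\upharpoonright E'$ spanning $I\cap E'$; one glues $B'$ to $I\setminus E'$, extends to a maximal common independent set $B$ via Lemma~\ref{lem:extend}, and the components outside $\Theta(I)$ are handled with no further induction at all: stability forces $\Theta(B)=\Theta(I)$, so by Lemma~\ref{lem:char maximal} each $E_i$ with $i\notin\Theta(I)$ must be $N$-spanned by $B$, making $B$ a base of $N$. The stable-$\Theta$ choice is the ``more careful choice of $D$'' you were looking for, and replacing your contraction step by a restriction step is what makes Condition~\ref{cond} hereditary.
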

 Indeed, a $ B $ provided by 
 Theorem \ref{thm:side} for 
 $ J=\varnothing $  witnesses the Intersection property of $ \{ M, N \} $ by Observation 
 \ref{obs:baseContained}. 

\subsection{Proof of Theorem \ref{thm:side}}

We use induction on $ n $. For $ n=0 $, the matroids are trivial, we must have $ J=\varnothing $ and $ \varnothing $ is a desired $ 
M $-independent base of $ N $. Suppose that $ n\geq1 $.  For 
$ I\in 
\mathcal{I}_M\cap 
\mathcal{I}_N $, let  \[ \Theta(I):=\{ i<n: E_i\subseteq \mathsf{span}_M(I)  \}.  \]
We take an $ I\in \mathcal{I}_M\cap \mathcal{I}_N $ with $  \mathsf{span}_M(I) \supseteq J $ that maximizes $ \Theta(I) $ in 
the sense that $\Theta(K)=\Theta(I)  $ holds for every  $  K\in \mathcal{I}_M\cap \mathcal{I}_N $ with $  \mathsf{span}_M(K) 
\supseteq I $. Note that $\Theta(I)\subsetneq n$ since otherwise $ W:=E $ would violate condition \ref{cond}.
 By symmetry we can assume without loss of generality that $ \Theta(I)=\{ 0,\dots, k-1 
 \} $ for some $ k<n $. We consider 
  $ E':=\bigcup_{i<k}E_i$ and $ J':=I\cap E' $ together with matroids $ M':=M\upharpoonright E' $ and $ N':=N\upharpoonright 
  E' $. 
  Then  $ N'=\bigoplus_{i<k}U_i $ and Condition \ref{cond} is satisfied by $ M' $ and $ N' $ because a violating $ 
  W $ would be also a violation with respect to $ M $ and $ N $. By induction we get an $ M' $-independent base $ B' $ of $ N' $ 
  with  $J'\subseteq \mathsf{span}_{M'}(B') $. Clearly
 \[  \begin{gathered}
   (I\setminus E')\cup B'\in \mathcal{I}_N\\ 
   \text{ because}\\ 
   B'\in \mathcal{I}_{N \upharpoonright E'},  (I\setminus E')\in \mathcal{I}_{N\upharpoonright (E\setminus E')}\\
   \text{ and }\\
    N=(N\upharpoonright E') \oplus (N-E').
  \end{gathered} \]

    We also know that $ (I\setminus E')\cup B' $ spans $ I $ in $ M $ 
  because $ I\cap E'=J'  $  is spanned by $ B' $ in $ M $.  We  extend $ B' $ to a maximal $ M $-independent subset of $ 
  (I\setminus E')\cup B' $ and then further extend the resulting set via Lemma \ref{lem:extend} 
  to a maximal element $ B $ 
  of $ \mathcal{I}_M\cap \mathcal{I}_N $. Then $ \mathsf{span}_M(B)\supseteq I $ by construction which implies that $  
  \mathsf{span}_M(B)\supseteq J  $ and $ 
  \Theta(B)=\Theta(I) $. On the one hand,  it follows from  $  \Theta(B)=\Theta(I) $ by
  Lemma \ref{lem:char maximal} via the maximality of $ B $ that $E_i\subseteq \mathsf{span}_N(B) $
    for every $ k\leq i<n $. On the 
  other hand, $ B'\subseteq B $ is a base of  $N'= \bigoplus_{i<k}U_i $. By combining these, it
 follows that $E_i\subseteq \mathsf{span}_N(B) $ for every $ i<n $. Thus $ B $ is an $ M $-independent base of $ N $ with 
$  \mathsf{span}_M(B)\supseteq J $ and therefore the proof of Theorem \ref{thm:side} is complete.

\begin{bibdiv}
\begin{biblist}

\bib{aharoni1998intersection}{article}{
      author={Aharoni, Ron},
      author={Ziv, Ran},
       title={The intersection of two infinite matroids},
        date={1998},
     journal={Journal of the London Mathematical Society},
      volume={58},
      number={03},
       pages={513\ndash 525},
}

\bib{aigner2018intersection}{article}{
      author={Aigner-Horev, Elad},
      author={Carmesin, Johannes},
      author={Fr{\"o}hlich, Jan-Oliver},
       title={On the intersection of infinite matroids},
        date={2018},
     journal={Discrete Mathematics},
      volume={341},
      number={6},
       pages={1582\ndash 1596},
}

\bib{nathanhabil}{thesis}{
      author={Bowler, Nathan},
       title={Infinite matroids},
        type={Habilitation Thesis},
        date={2014},
  note={\url{https://www.math.uni-hamburg.de/spag/dm/papers/Bowler\_Habil.pdf}},
}

\bib{bowler2015matroid}{article}{
      author={Bowler, Nathan},
      author={Carmesin, Johannes},
       title={Matroid intersection, base packing and base covering for infinite
  matroids},
        date={2015},
     journal={Combinatorica},
      volume={35},
      number={2},
       pages={153\ndash 180},
}

\bib{bowler2020almost}{article}{
      author={Bowler, Nathan},
      author={Carmesin, Johannes},
      author={Wojciechowski, Jerzy},
      author={Ghaderi, Shadisadat},
       title={The almost intersection property for pairs of matroids on common
  groundset},
        date={2020},
     journal={The Electronic Journal of Combinatorics},
       pages={P3\ndash 5},
}

\bib{bowler2016self}{article}{
      author={Bowler, Nathan},
      author={Geschke, Stefan},
       title={Self-dual uniform matroids on infinite sets},
        date={2016},
     journal={Proceedings of the American Mathematical Society},
      volume={144},
      number={2},
       pages={459\ndash 471},
}

\bib{bruhn2013axioms}{article}{
      author={Bruhn, Henning},
      author={Diestel, Reinhard},
      author={Kriesell, Matthias},
      author={Pendavingh, Rudi},
      author={Wollan, Paul},
       title={Axioms for infinite matroids},
        date={2013},
     journal={Advances in Mathematics},
      volume={239},
       pages={18\ndash 46},
}

\bib{edmonds2003submodular}{incollection}{
      author={Edmonds, Jack},
       title={Submodular functions, matroids, and certain polyhedra},
        date={2003},
   booktitle={Combinatorial optimization—eureka, you shrink!},
   publisher={Springer},
       pages={11\ndash 26},
}

\bib{erde2019base}{article}{
      author={Erde, Joshua},
      author={Gollin, J.~Pascal},
      author={Jo{\'o}, Attila},
      author={Knappe, Paul},
      author={Pitz, Max},
       title={Base partition for mixed families of finitary and cofinitary
  matroids},
        date={2021},
        ISSN={1439-6912},
     journal={Combinatorica},
      volume={41},
      number={1},
       pages={31\ndash 52},
         url={https://doi.org/10.1007/s00493-020-4422-4},
}

\bib{ghaderi2017}{thesis}{
      author={Ghaderi, Shadisadat},
       title={On the matroid intersection conjecture},
        type={PhD. Thesis},
        date={2017},
}

\bib{higgs1969equicardinality}{article}{
      author={Higgs, DA},
       title={Equicardinality of bases in {B}-matroids},
        date={1969},
     journal={Can. Math. Bull},
      volume={12},
       pages={861\ndash 862},
}

\bib{higgs1969matroids}{inproceedings}{
      author={Higgs, Denis~Arthur},
       title={Matroids and duality},
        date={1969},
   booktitle={Colloquium mathematicum},
      volume={2},
       pages={215\ndash 220},
}

\bib{joo2020MIC}{article}{
      author={Jo{\'o}, Attila},
       title={Proof of {N}ash-{W}illiams' intersection conjecture for countable
  matroids},
        date={2021},
        ISSN={0001-8708},
     journal={Advances in Mathematics},
      volume={380},
       pages={107608},
}

\bib{oxley1978infinite}{article}{
      author={Oxley, James},
       title={Infinite matroids},
        date={1978},
     journal={Proc. London Math. Soc},
      volume={37},
      number={3},
       pages={259\ndash 272},
}

\bib{oxley1992infinite}{article}{
      author={Oxley, James},
       title={Infinite matroids},
        date={1992},
     journal={Matroid applications},
      volume={40},
       pages={73\ndash 90},
}

\bib{rado1966abstract}{inproceedings}{
      author={Rado, Richard},
       title={Abstract linear dependence},
organization={Institute of Mathematics Polish Academy of Sciences},
        date={1966},
   booktitle={Colloquium mathematicum},
      volume={14},
       pages={257\ndash 264},
}

\end{biblist}
\end{bibdiv}

\end{document}